\documentclass[12pt]{article}
\usepackage{amsmath, amssymb, amsthm, enumerate, booktabs, accents, framed, graphicx, array, color, multirow, mathrsfs, amsrefs, mathrsfs, cases, euscript, bm, multicol, caption, subfigure}

\usepackage[utf8]{inputenc}
\usepackage[colorlinks,citecolor=blue,urlcolor=blue]{hyperref}
\usepackage[usenames,dvipsnames,svgnames,table]{xcolor}
\usepackage{a4wide}

\allowdisplaybreaks[4]

\numberwithin{equation}{section} \theoremstyle{plain}
\newtheorem{theorem}{Theorem}[section]
\newtheorem{lemma}{Lemma}[section]
\newtheorem{corollary}{Corollary}[section]
\newtheorem{proposition}{Proposition}[section]

\def\bC{\mathbb C}
\def\bE{\mathbb E}
\def\bN{\mathbb N}
\def\bR{\mathbb R}
\def\bP{\mathbb P}

\def\y{\mathbf y}

\def\Tr{\mathrm {Tr}}
\def\diag{\mathrm {diag}}

\begin{document}

\title{On spectrum of sample correlation matrices from large fold tensor vectors}

\date{\today}
\author{Wangjun Yuan\thanks{Department of Mathematics, Southern University of Science and Technology. E-mail: ywangjun@connect.hku.hk}}
\maketitle

\begin{abstract}
In this paper, we investigate the limiting spectral distribution of the sample correlation matrix, whose sample vectors are $k$-fold tensor products of $n$-dimensional vectors with i.i.d. entries. We focus on the limiting regime $n,k \to \infty$ with $k = o(n)$, and we show that the limiting spectral distribution is the Mar\v{c}enko-Pastur law.
As a consequence, we show that the limiting spectral distribution of the Whishart matrix from the $k$-fold tensor product of independent uniformly distributed unit vectors in $\mathbb C^n$ is the Mar\v{c}enko-Pastur law.
\end{abstract}

\noindent{\bf AMS 2020 subject classifications:} Primary 60B20; Secondary 15B52, 15A18.

\medskip 

\noindent{\bf Keywords and phrases:} Large $k$-fold tensors; Eigenvalue spectral distribution; Mar\v{c}enko-Pastur law; Sample correlation matrix; Quantum information theory. 

\section{Introduction}
\label{sec:intro}

For $n \in \bN$, let $\{\xi_1, \ldots, \xi_n\}$ be a family of i.i.d. centered random variables with finite variance. Let $\y = (\xi_1, \ldots, \xi_n) \in \bC^n$ be a random vector and $\{\y_{\alpha}^{(l)}: 1 \le \alpha \le m, 1 \le l \le k\}$ be a family of i.i.d. copies of $\y$.
Here, the notation $\otimes$ is the tensor outer product.
For $1 \le \alpha \le m$, we define the $k$-fold tensor product by $Y_{\alpha} = \y_{\alpha}^{(1)} \otimes \cdots \otimes \y_{\alpha}^{(k)}$, which is a multi-dimensional array with entry
\begin{align*}
    \left( Y_\alpha \right)_{(j_1,\ldots,j_k)} = \prod_{l=1}^k \left( \y_\alpha^{(l)} \right)_{j_l}, \quad
    \forall 1 \le j_1, \ldots, j_k \le n.
\end{align*}
We refer readers to \cite{Tensorbook2018} for more details of tensor outer product.
We identify each $Y_{\alpha}$ as an $n^k$-dimensional vector, and we consider the normalized vector $Y_\alpha/\|Y_\alpha\|$, where $\|Y_\alpha\|$ is the $L^2$-norm of the vector $Y_\alpha$. We set $Y = (Y_1/\|Y_1\|, \ldots, Y_m/\|Y_m\|)$, which is a $n^k \times m$ matrix. Let $\{\tau_1, \tau_2, \ldots\}$ be a sequence of positive real numbers, and $\Lambda_m = \diag(\tau_1,\ldots,\tau_m)$ be the diagonal matrix. We consider the following $n^k \times n^k$ Hermitian matrix
\begin{align}\label{eq-correlated matrix}
	M_{n,k,m} = Y \Lambda_m Y^*
    = \sum_{\alpha=1}^m \tau_{\alpha} \dfrac{Y_{\alpha} Y_{\alpha}^*}{\|Y_\alpha\|^2},
\end{align}
which is the sum of $m$ independent rank-$1$ Hermitian matrices.

For an $N \times N$ matrix $A$ with eigenvalues $\lambda_1,\ldots,\lambda_N$, we define the \emph{empirical spectral distribution} (ESD) function of $A$ by
\begin{align*}
    F^A(x) = \dfrac{1}{N} \sum_{i=1}^N {\bf 1}_{\lambda \le x}, \quad x \in \bR.
\end{align*}
We are interested in the large dimensional limit of $F^{M_{n,k,m}}$.
Throughout the paper, we always assume that the dimensional parameters $m,k$ are functions of $n$, and grow towards infinity following the proportion
\begin{align} \label{eq-def-ratio}
	\lim_{n \to \infty} \dfrac{m}{n^k} \to c
\end{align}
for some constant $c \in (0,\infty)$.
We also assume that the sequence $\{\tau_1,\tau_2,\ldots\}$ satisfies the following limiting moment condition
\begin{align} \label{eq-def-tau}
    \lim_{m \to \infty} \dfrac{1}{m} \sum_{j=1}^m \tau_j^q = m_q^{(\tau)} < \infty
\end{align}
with $|m_q^{{(\tau)}}| \le A^qq^q$ for some positive constant $A$, for all $q \in \bN$.

We are interested in the limiting ESD function $F^{M_{n,k,m}}$ of \eqref{eq-correlated matrix} when the dimension $n \to \infty$.
Our study of model \eqref{eq-correlated matrix} is motivated by problems in both statistics and quantum information theory.

In statistics, it is interesting to understand the unknown population distribution from large samples. The theory to address this problem is hypothesis testing. 
The asymptotic behavior of large samples is derived under certain hypotheses on the population.
Random matrix theory is a very important tool in multivariate statistics, and the spectral properties in high-dimensional cases are of particular interest.

The application of random matrix theory to hypothesis testing dates back to at least \cite{MP67}, where the population covariance matrix is investigated via the sample covariance matrix.
Let's introduce the \emph{sample covariance matrix} as follows:
\begin{align}\label{eq-covariance matrix}
	\widetilde M_{n,k,m} = \widetilde Y \Lambda_m \widetilde Y^*
    = \dfrac{1}{n^k} \sum_{\alpha=1}^m \tau_{\alpha} Y_{\alpha} Y_{\alpha}^*,
\end{align}
where $\widetilde Y = \frac{1}{\sqrt{n}^k}  (Y_1,\ldots,Y_m)$.
It is an $n^k \times n^k$ Hermitian matrix.
We would like to remark that the model \eqref{eq-covariance matrix} is also known as Wishart matrix in some literature.
The paper \cite{MP67} considers the simplest case $k=1$, and derives the limiting empirical spectral distribution (LSD) under the setting that $\xi_1$ has unit variance. The LSD is known as the Marc\v{e}nko-Pastur law. In the case of $\tau_\alpha \equiv 1$, the Marc\v{e}nko-Pastur law has the density function
\begin{align} \label{eq-MP law}
    p_{MP}(x) = \dfrac{\sqrt{((1+\sqrt{c})^2-x)(x-(1-\sqrt{c})^2)}}{2\pi x} {\bf 1}_{[(1-\sqrt{c})^2,(1+\sqrt{c})^2]}(x) + (1-c) \delta_0(dx) {\bf 1}_{0<c<1}.
\end{align}
Further details can be found in \cites{Anderson2010, Bai2010}.
Subsequently, numerous works on sample covariance matrices have emerged in the literature, including \cites{BaiZhou08,Pajor09,Silv95}, and the references therein.
Eventually, for the case $k=1, \tau_\alpha \equiv 1$, the necessary and sufficient condition for the Mar\v{c}enko-Pastur law to serve as the limiting distribution of $F^{\widetilde M_{n,k,m}}$ is established in \cite{Yaskov2016}.

In addition to the sample covariance matrix, the sample correlation matrix, which reveals the correlation on the population, is another fundamental tool in multivariate statistics.
The model \eqref{eq-correlated matrix} we are interested in is known as the \emph{sample correlation matrix}, and it provides information of Pearson
correlation for multivariate data.
The study of the sample correlation matrix \eqref{eq-correlated matrix} can be dated back to \cite{Jiang2004}, where the case $k=1, \tau_\alpha \equiv 1$ is considered. The LSD of \eqref{eq-correlated matrix} is still the Marc\v{e}nko-Pastur law \eqref{eq-MP law}.
Since then, a rich literature on the sample correlation matrices has developed, including \cites{GHPY2017,Ding2020,YLTZ2022,YZZ2023,HY2022,BHXZ2024} and the references therein.
Under the same setting of $k=1, \tau_\alpha \equiv 1$, the necessary and sufficient condition for the convergence of $F^{M_{n,k,m}}$ to the Mar\v{c}enko-Pastur law in the isotropic model was established in \cite{DH2025} if the underlying vector $Y_\alpha$ is a linear combination of a random vector with i.i.d. entries, and in \cite{DongYao2025} for the general isotropic model.

The cases $k \ge 2$ are significantly different from the case $k=1$. The tensor structure appears when $k \ge 2$. Thus, in statistics, the theory with $k \ge 2$ may be applied to the samples from a population with tensor structure.

The second motivation to study the models \eqref{eq-correlated matrix} and \eqref{eq-covariance matrix} comes from quantum information theory.
We start with a classical ball-to-bin probability problem.
We distribute $m$ balls randomly into $n$ bins and see how many balls are in each bin. The problem can be rephrased as the spectrum of $\widetilde M_{n,1,m}$ when $Y_\alpha$ is chosen randomly from the canonical basis $\{e_1, \ldots, e_n\}$ of $\bC^n$ and $\tau_\alpha \equiv 1$.
The canonical basis corresponds to the bins, while $Y_\alpha$ is understood as the $\alpha$-th ball.
We refer interested readers to \cite{Yuan2024} for more details.
The quantum analog of this classical problem is the spectrum of $\widetilde M_{n,1,m}$ if the vector $Y_\alpha$ is picked randomly from the unit sphere $S^{2n-1}$ on $\bC^n$.
A modified version of the quantum problem, which was introduced in \cite{Hastings2012}, is to consider the matrix $M_{n,k,m}$ with the vector $Y_1$ from the random product states in $(\bC^n)^{\otimes k}$. When $k$ and $m/n^k$ are fixed, and the base vector $\y$ is Gaussian, \cite{Hastings2012} established the convergence in expectation of the empirical spectral distribution (ESD) towards the Mar\v{c}enko-Pastur law \eqref{eq-MP law} as $n \to +\infty$ by computing the moments.

In the realm of quantum physics and quantum information theory, it is natural to explore systems with a multitude of quantum states, i.e. consider the case that $k$ is large. See, for example, \cites{Collins2011,Collins2013}.
Moreover, in mathematics, the tensor structure of $Y_\alpha$ leads to a dependence on the entries of $Y_\alpha$ and non-unitary invariance. This phenomenon becomes strong if $k$ is large.

Recently, the LSD for the $k$-fold tensor model \eqref{eq-covariance matrix} was explored in \cite{Lytova2018} for real random vector $\y$ when $k$ is large with the constraint $k = o(n)$. In the special case where $\tau_{\alpha} \equiv 1$, the LSD is exactly the Mar\v{c}enko-Pastur law \eqref{eq-MP law}. The fluctuation of the ESD around the LSD is also established for a class of linear spectral statistics following the approach of \cite{LytovaPastur09} when $k=2$.
In a more recent study, \cite{BYY2022} considered the complex case of $\y$ with $k = O(n)$ for the model \eqref{eq-covariance matrix}. Under the limiting condition $k/n \to d$, the limiting moment sequence of the ESD of $M_{n,k,m}$ is carried out by assuming the finiteness of all moments of $\xi_1$.
It is interesting that the limiting moment sequence obtained in \cite{BYY2022} involves the fourth moment of $\xi_1$ if $d>0$ and $|\xi_1|>1$ with positive probability. If $\tau_{\alpha} \equiv 1$, the resulting limiting moment sequence corresponds to the Mar\v{c}enko-Pastur law \eqref{eq-MP law} when either $d=0$ or $d>0$ and $|\xi_1|=1$ a.s..
In the setting that $\xi_1$ is complex and $|\xi_1|=1$ a.s., the restriction of the growth of $k$ is removed in \cite{Yuan2024}. The LSD of the matrix \eqref{eq-covariance matrix} is established, and it is exactly the Mar\v{c}enko-Pastur law \eqref{eq-MP law} whenever $\tau_{\alpha} \equiv 1$.

The paper focuses on the tensor sample correlation matrix. The number $k$ of the tensor fold is large, with the following limiting constraint.
\begin{align} \label{eq-lim-k}
    \lim_{n \to \infty} \dfrac{k}{n} = 0.
\end{align}
To the best of our knowledge, the results in the present paper are the first on the limiting spectrum of the sample correlation matrix from large fold tensor vectors ($k \gg 1$).

The following theorem is the main result of the paper.

\begin{theorem} \label{Thm}
Let the limiting conditions \eqref{eq-def-ratio}, \eqref{eq-def-tau} and \eqref{eq-lim-k} hold. Assume that for all $p \in \bN$, the $p$-th moment $m_p = \bE [|\xi_1|^p] < \infty$. Then there exists a probability distribution function $F_c$, such that
\begin{align*}
    \lim_{n \to \infty} F^{M_{n,k,m}} = F_c
\end{align*}
in probability.
Moreover, in the case that $\tau_\alpha \equiv 1$, the limiting distribution function $F_c$ is exactly the Mar\v{c}enko-Pastur law with density function \eqref{eq-MP law}.
\end{theorem}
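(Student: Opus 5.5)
The plan is to use the moment method, with a reduction to known results on the tensor sample covariance matrix \eqref{eq-covariance matrix}. Write $\sigma^2=\bE|\xi_1|^2$; by scaling we may assume $\sigma^2=1$. Then $\|Y_\alpha\|^2=\prod_{l=1}^k\|\y_\alpha^{(l)}\|^2$, and each factor satisfies $\|\y_\alpha^{(l)}\|^2/n = 1+O(n^{-1/2})$ with all moments controlled. Hence
\begin{align*}
\log\frac{\|Y_\alpha\|^2}{n^k}=\sum_{l=1}^k \log\frac{\|\y_\alpha^{(l)}\|^2}{n}
\end{align*}
has mean of order $k/n$ and variance of order $k/n$. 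Both tend to $0$ by \eqref{eq-lim-k}, so $\|Y_\alpha\|^2/n^k\to1$ in probability. This concentration is not uniform in $\alpha$, and that is why I would not compare the two matrices in operator norm.

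I would instead compute the moments of $M_{n,k,m}$ directly. For fixed $q$,
\begin{align*}
\frac1{n^k}\bE\Tr M_{n,k,m}^q=\frac1{n^k}\sum_{\alpha_1,\dots,\alpha_q}\tau_{\alpha_1}\cdots\tau_{\alpha_q}\,\bE\prod_{i=1}^q \frac{\langle Y_{\alpha_i},Y_{\alpha_{i+1}}\rangle}{\|Y_{\alpha_i}\|\,\|Y_{\alpha_{i+1}}\|},
\end{align*}
with cyclic indices. Each inner product factorizes over the tensor folds, $\langle Y_\alpha,Y_\beta\rangle=\prod_l\langle \y^{(l)}_\alpha,\y^{(l)}_\beta\rangle$, and so do the norms. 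Because the $k$ folds are independent, every summand becomes the $k$-th power of a single-fold expectation:
\begin{align*}
\bE\prod_i \frac{\langle Y_{\alpha_i},Y_{\alpha_{i+1}}\rangle}{\|Y_{\alpha_i}\|\|Y_{\alpha_{i+1}}\|}=\Big(\bE\prod_i \frac{\langle \y_{\alpha_i},\y_{\alpha_{i+1}}\rangle}{\|\y_{\alpha_i}\|\|\y_{\alpha_{i+1}}\|}\Big)^k.
\end{align*}
This power structure is the key simplification. I would group the index tuples $(\alpha_1,\dots,\alpha_q)$ by their partition pattern $\pi$ of $\{1,\dots,q\}$. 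For each $\pi$ I need the single-fold expectation $E_\pi$ up to a relative error $1+O(1/n)$, because raising to the power $k$ then costs only a factor $(1+O(1/n))^k\to1$. For the Marčenko–Pastur contributing patterns (noncrossing-type, where the cycle reduces to a tree), the normalized single-fold expectation equals $n^{-(\#\text{something})}(1+O(1/n))$, or even exactly; for example $\bE|\langle \y_\alpha,\y_\beta\rangle|^2/(\|\y_\alpha\|^2\|\y_\beta\|^2)$ is computed by exchangeability to be exactly $1/n$. The number of tuples with pattern $\pi$ is about $m^{|\pi|}$. Taking the power $k$ then turns $n^{-r}$ into $N^{-r}$ with $N=n^k$, which reproduces exactly the free Poisson / Marčenko–Pastur moment $\sum_{\pi\in NC(q)} c^{|\pi|}\cdots$, weighted by the $m^{(\tau)}_j$ via \eqref{eq-def-tau}.

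The main obstacle is the non-leading patterns. I must show that for crossing or otherwise non-extremal $\pi$ the single-fold expectation is at most $C n^{-r-1}$ (relative gain $1/n$ per fold). After the $k$-th power this gives $(C/n)^k$ times the leading order. Since the pattern count is bounded, the term vanishes even against $C^k$ as long as $C/n\to0$; but one must check that $C^k$ with $C$ depending on $q$ is beaten by an $n^{-k}$ gain, which holds. The delicate part is controlling the ratio with random norms to relative precision $1+O(1/n)$ rather than $O(n^{-1/2})$, uniformly over patterns. For this I would expand $\|\y\|^{-2}=n^{-1}(1+\epsilon)^{-1}$ with $\epsilon=\|\y\|^2/n-1$, note that the first-order terms in $\epsilon$ have mean of order $1/n$ after coupling with the numerators, and use the finite moments $m_p$ to bound the remainder. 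Second, I need the variance: $\mathrm{Var}(n^{-k}\Tr M^q)\to0$, done the same way with two cycles, where disconnected pairs cancel exactly by independence. Convergence in probability of the ESD then follows from convergence of the expected moments, the vanishing variances, and the Carleman condition that the growth bound $|m_q^{(\tau)}|\le A^q q^q$ guarantees for the limit moments. Finally, the case $\tau_\alpha\equiv1$ is identified as the law \eqref{eq-MP law} by matching moments with the classical $k=1$ computation.
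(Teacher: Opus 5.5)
Your route is correct in outline, but it differs genuinely from the paper's. The paper never computes moments of $M_{n,k,m}$. Instead it compares $M_{n,k,m}$ with the sample covariance matrix $\widetilde M_{n,k,m}$ using Bai's inequality
$L^4(F^{AA^*},F^{BB^*})\le \frac{2}{p^2}\Tr((A-B)(A-B)^*)\Tr(AA^*+BB^*)$.
This is a Hilbert--Schmidt comparison, not an operator-norm one, so the non-uniformity in $\alpha$ that made you abandon a direct comparison does no harm. By Jiang's identity, the difference term equals $\sum_\alpha\tau_\alpha(1-\|Y_\alpha\|/n^{k/2})^2\le\sum_\alpha\tau_\alpha\bigl|\|Y_\alpha\|^2/n^k-1\bigr|$. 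Only its average over $\alpha$ has to vanish, and that follows from the single identity $\bE\bigl|\|Y_\alpha\|^2/n^k-1\bigr|^2=(1+(m_4-1)/n)^k-1=O(k/n)$. The limit law itself is then imported from the known LSD of $\widetilde M_{n,k,m}$. That argument takes a few lines and needs only $m_4<\infty$ for the comparison step.

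Your approach is self-contained and rests on a genuinely nice exact structure: every mixed moment factorizes as $E_\pi^k$ with $E_\pi$ a one-fold quantity. This makes it transparent that $k=o(n)$ is used only to absorb $(1+O(1/n))^k$. The price is a delicate $k=1$ analysis for the dependent entries of $u=\y/\|\y\|$. You need relative precision $O(1/n)$ for noncrossing patterns, an absolute bound $O(n^{-|\pi|})$ for all other patterns, and the analogous connected two-cycle bound $O(n^{1-b})$ for the variance.

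Two cautions for that part:
\begin{enumerate}
\item $\bE|\langle u,v\rangle|^2=1/n$ holds exactly only when off-diagonal correlations vanish, e.g.\ for symmetric $\xi_1$. In general $\bE[\bar u_1u_2]$ is nonzero, of order $n^{-3}$ through $\bE[\xi_1|\xi_1|^2]$, which gives $1/n+O(n^{-4})$. This is harmless.
\item The remainder of your $\epsilon$-expansion cannot be handled by a fixed-order expansion plus a crude moment bound. For a $q$-cycle with all indices distinct, the unnormalized product $X$ has $\bE X\asymp n$ but $(\bE|X|^2)^{1/2}\gtrsim n^{q/2}$. You therefore need either to expand to an order depending on $q$, or to count graphs directly. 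In the graph count, a singly traversed edge must be absorbed by an $\epsilon$-factor on the same coordinate, costing $1/n$, and such an edge cannot occur in a tree. The event $\{\|\y\|^2<n/2\}$ must be treated separately, using $|\langle u,v\rangle|\le 1$ and high-moment tail bounds.
\end{enumerate}
Both of these can be carried out under the all-moments assumption, so they refine your plan rather than break it.
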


We would like to mention that the two models \eqref{eq-correlated matrix} and \eqref{eq-covariance matrix} are normalized in different ways, and \eqref{eq-correlated matrix} is a case of the quantum analogous problem, noting that the columns of $Y$ are unit vectors, while the columns of $\widetilde Y$ are not.
Thus, the results from the model $\widetilde M_{n,k,m}$ provide information of the quantum problem.
By choosing $\xi_1$ to be a standard Gaussian random variable, we obtain the following corollary.

\begin{corollary} \label{Coro}
Let the limiting conditions \eqref{eq-def-ratio}, \eqref{eq-def-tau} and \eqref{eq-lim-k} hold. Assume that the base random vector $\y$ is uniformly distributed on the unit sphere $S^{2n-1}$ of $\bC^n$. We set $Y' = (Y_1,\ldots,Y_m)$ and define
\begin{align*}
    M_{n,k,m}' = Y' \Lambda_m Y'^* = \sum_{\alpha=1}^m \tau_\alpha Y_\alpha Y_\alpha^*.
\end{align*}
Then there exists a probability distribution function $F_c$, such that
\begin{align*}
    \lim_{n \to \infty} F^{M_{n,k,m}'} = F_c
\end{align*}
in probability.
Moreover, in the case that $\tau_\alpha \equiv 1$, the limiting distribution function $F_c$ is exactly the Mar\v{c}enko-Pastur law with density function \eqref{eq-MP law}.
\end{corollary}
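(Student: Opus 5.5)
The corollary will follow from Theorem \ref{Thm} by realizing the uniform distribution on $S^{2n-1}$ as a normalized complex Gaussian vector. Let $\xi_1,\ldots,\xi_n$ be i.i.d. standard complex Gaussian random variables, so that $\mathbf g = (\xi_1,\ldots,\xi_n)$ has all moments finite. It is classical, by unitary invariance of the Gaussian law, that $\mathbf g/\|\mathbf g\|$ is uniformly distributed on the unit sphere $S^{2n-1}\subset\bC^n$. Accordingly, let $\{\mathbf g_\alpha^{(l)}\}$ be i.i.d. copies of $\mathbf g$, and set $\y_\alpha^{(l)} = \mathbf g_\alpha^{(l)}/\|\mathbf g_\alpha^{(l)}\|$. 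This family is then an i.i.d. family of uniform vectors on the sphere, so $M'_{n,k,m}$ built from it has the same law as in the statement. Since the ESD depends only on the matrix, convergence in probability of $F^{M'_{n,k,m}}$ can be checked on this particular realization.

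The key identity concerns the tensor $G_\alpha = \mathbf g_\alpha^{(1)}\otimes\cdots\otimes \mathbf g_\alpha^{(k)}$. Because the Euclidean norm is multiplicative on tensor products, $\|G_\alpha\| = \prod_{l=1}^k \|\mathbf g_\alpha^{(l)}\|$. Therefore
\begin{align*}
    Y_\alpha = \y_\alpha^{(1)}\otimes\cdots\otimes\y_\alpha^{(k)} = \frac{G_\alpha}{\prod_{l}\|\mathbf g_\alpha^{(l)}\|} = \frac{G_\alpha}{\|G_\alpha\|},
\end{align*}
and in particular $\|Y_\alpha\|=1$. This gives
\begin{align*}
    M'_{n,k,m} = \sum_{\alpha=1}^m \tau_\alpha Y_\alpha Y_\alpha^* = \sum_{\alpha=1}^m \tau_\alpha \frac{G_\alpha G_\alpha^*}{\|G_\alpha\|^2},
\end{align*}
which is exactly the sample correlation matrix \eqref{eq-correlated matrix} built from the Gaussian base vector $\mathbf g$.

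The hypotheses of Theorem \ref{Thm} are satisfied. The entries of $\mathbf g$ are i.i.d. and centered, they have finite variance, and all their absolute moments $\bE|\xi_1|^p$ are finite. The limiting conditions \eqref{eq-def-ratio}, \eqref{eq-def-tau} and \eqref{eq-lim-k} are assumed in the corollary. One minor point is that $\|G_\alpha\|>0$ almost surely, so the normalization is well defined. Theorem \ref{Thm} then yields $F^{M'_{n,k,m}}\to F_c$ in probability, and $F_c$ is the Mar\v{c}enko-Pastur law \eqref{eq-MP law} when $\tau_\alpha\equiv1$.

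The only step needing care is the distributional identification. The corollary should be read as using i.i.d. uniform sphere vectors for all $\alpha$ and $l$, which is consistent with the construction above. The Gaussian representation reproduces the joint law of the whole family because the normalizations are applied componentwise to independent vectors. Beyond this there is no genuine obstacle, since all the analytic work is contained in Theorem \ref{Thm}.
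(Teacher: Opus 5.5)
Your proposal is correct and follows essentially the same route as the paper. You realize the uniform sphere vectors as normalized Gaussian vectors, use the multiplicativity $\|Y_\alpha\| = \prod_l \|\y_\alpha^{(l)}\|$ to identify $M'_{n,k,m}$ in law with the sample correlation matrix $M_{n,k,m}$ built from Gaussian entries, and then invoke Theorem \ref{Thm}. Your explicit choice of \emph{complex} standard Gaussian entries is the right one, since real Gaussians would give the uniform law on the real sphere in $\bR^n$ rather than on $S^{2n-1}\subset\bC^n$; the paper leaves this point implicit.
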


The rest of the paper is organized as follows. We introduce some results on limiting spectrum of the model \eqref{eq-covariance matrix} and some preliminaries on matrices in Section \ref{sec-preliminary}. Then we develop the proofs of Theorem \ref{Thm} and Corollary \ref{Coro} in Section \ref{sec-proofs}.

\section{Preliminaries} \label{sec-preliminary}

The following result is a combination of \cite{BYY2022}*{Proposition 2.4} and the Carleman’s condition.

\begin{lemma} (\cite{BYY2022}*{Proposition 2.4}) \label{Lem-LSD-M}
Let the assumptions in Theorem \ref{Thm} hold. Then there exists a probability distribution function $F_c$, such that
\begin{align*}
    \lim_{n \to \infty} F^{\widetilde M_{n,k,m}} = F_c
\end{align*}
almost surely.
Moreover, in the case that $\tau_\alpha \equiv 1$, the limiting distribution function $F_c$ is exactly the Mar\v{c}enko-Pastur law with density function \eqref{eq-MP law}.
\end{lemma}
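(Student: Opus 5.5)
Lemma \ref{Lem-LSD-M} is a combination of two ingredients. The first is the moment convergence of \cite{BYY2022}*{Proposition 2.4}. The second is a standard moment-method upgrade via Carleman's condition. The plan is to invoke \cite{BYY2022}*{Proposition 2.4} in the regime $k/n \to d = 0$, which is exactly \eqref{eq-lim-k}. Under \eqref{eq-def-ratio}, \eqref{eq-def-tau} and finiteness of all moments of $\xi_1$, that proposition gives, for every fixed $p \in \bN$,
\begin{align*}
    \lim_{n\to\infty} \frac{1}{n^k}\Tr \bigl(\widetilde M_{n,k,m}\bigr)^p = \beta_p \quad \text{almost surely},
\end{align*}
with explicit limits $\beta_p$. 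These are given by a sum over noncrossing partitions of products of $c$ and the $\tau$-moments $m_q^{(\tau)}$. Since $d=0$, the fourth-moment correction of \cite{BYY2022} vanishes, so $\beta_p$ does not depend on the law of $\xi_1$. Here I will take for granted that the proposition is stated with almost sure convergence, or with expectation convergence plus a variance bound summable in $n$. If only the latter is available, I would get the almost sure statement from Borel--Cantelli, provided the variance is $O(n^{-2})$; that rate is the typical outcome of the tensor moment computations.

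Next I would show that $(\beta_p)$ determines a unique probability distribution $F_c$ by checking Carleman's condition $\sum_p \beta_{2p}^{-1/(2p)} = \infty$. The number of noncrossing partitions of $[p]$ is the Catalan number, which is at most $4^p$. Each block of size $q$ contributes a factor of the form $c^{\#}$ times $m_q^{(\tau)}$. The hypothesis $|m_q^{(\tau)}| \le A^q q^q$ then gives
\begin{align*}
    |\beta_p| \le C^p \, 4^p \max_{\sum q_i = p} \prod_i A^{q_i} q_i^{q_i} \le (C')^p p^p ,
\end{align*}
since $\prod q_i^{q_i} \le p^p$. Hence $\beta_{2p}^{1/(2p)} \le C'' p$, and the Carleman series diverges like the harmonic series. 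This step is the main point requiring care: one must track the combinatorial form of $\beta_p$ from \cite{BYY2022} precisely enough to get this $p^p$-type bound. I expect it to be routine once the formula is written out.

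With moment determinacy in hand, the standard moment convergence theorem finishes the proof. Almost sure convergence of all moments of $F^{\widetilde M_{n,k,m}}$ to the moments of a distribution that is determined by its moments implies $F^{\widetilde M_{n,k,m}} \to F_c$ weakly, almost surely. For this, intersect the countably many probability-one events, one for each $p$.

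Finally, for $\tau_\alpha \equiv 1$ we have $m_q^{(\tau)}=1$. Then $\beta_p = \sum_{\pi \in NC(p)} c^{|\pi|}$, which are the moments of the Mar\v{c}enko--Pastur law \eqref{eq-MP law} with ratio $c$, up to matching the convention for which dimension $c$ multiplies. By uniqueness, $F_c$ is the Mar\v{c}enko--Pastur law.
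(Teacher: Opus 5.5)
Your proposal is correct and takes essentially the same route as the paper. The paper justifies this lemma only by citing the moment convergence of \cite{BYY2022}*{Proposition 2.4} in the regime $k/n \to 0$ (where the fourth-moment correction disappears) and then applying Carleman's condition. Your bound $|\beta_p| \le (C')^p p^p$, derived from $|m_q^{(\tau)}| \le A^q q^q$, is exactly the Carleman verification the paper leaves implicit, and your identification of the Mar\v{c}enko--Pastur law via the moments $\sum_{\pi} c^{|\pi|}$ over noncrossing partitions matches the paper's claim.
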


Next, we turn to the matrix theory. The following lemma is a simple extension of \cite{Jiang2004}*{Lemma 2.1}.
\begin{lemma} (\cite{Jiang2004}*{Lemma 2.1}) \label{Lem-matrix}
Let $A$ be an $n \times p$ complex random matrix. Denote by $A_j$ the $j$-th column of $A$, and $A_{ij}$ the $(i,j)$-th entry of $A$. Let $B$ be the matrix whose $j$-th column is $A_j/\|A_j\|$. Then for any $n \times n$ diagonal matrix $\Lambda = \diag (\Lambda_{11},\ldots,\Lambda_{nn})$
\begin{align*}
    \Tr \left( \left( \dfrac{1}{\sqrt{n}} A - B \right) \Lambda_n \left( \dfrac{1}{\sqrt{n}} A - B \right)^* \right)
    = \sum_{j=1}^p \Lambda_{jj} \left( \dfrac{1}{n} \|a_j\|^2 - 1 \right) - 2 \sum_{j=1}^p \Lambda_{jj} \left( \dfrac{1}{\sqrt{n}} \|a_j\| - 1 \right).
\end{align*}
\end{lemma}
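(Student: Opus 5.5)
The identity is a column-by-column computation, because $\Lambda$ is diagonal. I read the statement with its implicit conventions: $\Lambda$ is the $p\times p$ diagonal matrix $\diag(\Lambda_{11},\ldots,\Lambda_{pp})$ acting on the column index (written $\Lambda_n$ in the display), $a_j$ denotes the column $A_j$, and every column is nonzero, so that $B$ is well defined (for random $A$ this holds almost surely or is imposed by convention).

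The first step writes $C = \frac{1}{\sqrt n}A - B$, whose $j$-th column is
\begin{align*}
C_j = \Bigl(\frac{1}{\sqrt n} - \frac{1}{\|a_j\|}\Bigr) a_j .
\end{align*}
Since $\Lambda$ is diagonal, $C\Lambda C^* = \sum_{j=1}^p \Lambda_{jj} C_j C_j^*$. Using $\Tr(C_jC_j^*) = \|C_j\|^2$ and linearity of the trace, we get
\begin{align*}
\Tr\bigl(C\Lambda C^*\bigr) = \sum_{j=1}^p \Lambda_{jj}\|C_j\|^2
= \sum_{j=1}^p \Lambda_{jj}\,\|a_j\|^2\Bigl(\frac{1}{\sqrt n} - \frac{1}{\|a_j\|}\Bigr)^2
= \sum_{j=1}^p \Lambda_{jj}\Bigl(\frac{\|a_j\|}{\sqrt n} - 1\Bigr)^2 .
\end{align*}

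The second step is the scalar identity $(x-1)^2 = (x^2-1) - 2(x-1)$, applied with $x = \|a_j\|/\sqrt n$, so that $x^2 = \frac1n\|a_j\|^2$. Substituting this into each summand and splitting the sum gives exactly the right-hand side of the lemma.

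There is no genuine obstacle here. The only point needing care is bookkeeping: the dimension of $\Lambda$ must match the number of columns $p$, and the normalization must be well defined (nonzero columns). Once these conventions are fixed, the proof is the two displayed computations above.
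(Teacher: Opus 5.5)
Your proof is correct: the column formula $C_j = (\frac{1}{\sqrt n} - \frac{1}{\|a_j\|})a_j$, the diagonal reduction $\Tr(C\Lambda C^*) = \sum_j \Lambda_{jj}\|C_j\|^2 = \sum_j \Lambda_{jj}(\|a_j\|/\sqrt n - 1)^2$, and the identity $(x-1)^2 = (x^2-1) - 2(x-1)$ give exactly the stated right-hand side. The paper gives no proof of its own and only cites Jiang (2004), Lemma 2.1 as a simple extension, so your direct computation is the verification that citation stands in for, and you read the statement's inconsistent notation the right way ($\Lambda$ is $p\times p$ acting on the column index, $a_j = A_j$, all columns nonzero).
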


We end this section by the following difference inequality, which could be found in \cite{Bai1999}.
\begin{lemma} (\cite{Bai1999}*{Lemma 2.7}) \label{Lem-preliminary-compare}
Let $A,B$ be two $p \times n$ complex matrices. Then
\begin{align*}
    L^4 \left( F^{AA^*}, F^{BB^*} \right) \le \dfrac{2}{p^2} \Tr \left( (A-B)(A-B)^* \right)  \Tr \left( AA^*+BB^* \right).
\end{align*}
Here, $L(\cdot,\cdot)$ denotes the L\'evy distance of two distribution functions.
\end{lemma}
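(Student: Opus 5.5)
The plan is to pass through the $L^1$ distance between the two distribution functions. We then compare ordered eigenvalues of $AA^*$ and $BB^*$, and control these through the singular values of $A$ and $B$ via the Hoffman--Wielandt (Mirsky) inequality.

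\emph{Step 1: $L^2(F,G)\le \int_{\bR}|F(x)-G(x)|\,dx$ for any distribution functions $F,G$.} Suppose $L(F,G)>\varepsilon$. Then, possibly after swapping $F$ and $G$, there is $x$ with $F(x-\varepsilon)-\varepsilon>G(x)$. For every $y\in[x-\varepsilon,x]$, monotonicity gives
\begin{align*}
F(y)\ge F(x-\varepsilon)>G(x)+\varepsilon\ge G(y)+\varepsilon .
\end{align*}
Hence the integral is at least $\varepsilon\cdot\varepsilon=\varepsilon^2$. Letting $\varepsilon\uparrow L(F,G)$ gives the claim.

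\emph{Step 2: reduce to ordered eigenvalues.} Let $\lambda_1\ge\cdots\ge\lambda_p$ and $\mu_1\ge\cdots\ge\mu_p$ be the eigenvalues of $AA^*$ and $BB^*$. Both ESDs are uniform measures on $p$ atoms. The $L^1$ distance between their distribution functions is realized by the monotone coupling, so
\begin{align*}
\int_{\bR}\left|F^{AA^*}-F^{BB^*}\right|dx=\dfrac1p\sum_{k=1}^p|\lambda_k-\mu_k| .
\end{align*}
Alternatively, the right-hand side dominates the integral by the elementary bound obtained from the pointwise estimate $|{\bf 1}_{\lambda_k\le x}-{\bf 1}_{\mu_k\le x}|$. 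Only this upper bound is needed.

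\emph{Step 3: singular values and Cauchy--Schwarz.} Write $\lambda_k=s_k^2$ and $\mu_k=t_k^2$, where $s_k,t_k$ are the decreasingly ordered singular values of $A,B$, padded with zeros if $p>n$. Then
\begin{align*}
\sum_k|s_k^2-t_k^2|=\sum_k|s_k-t_k|(s_k+t_k)\le\Big(\sum_k(s_k-t_k)^2\Big)^{1/2}\Big(\sum_k(s_k+t_k)^2\Big)^{1/2}.
\end{align*}
The second factor is at most $\big(2\Tr(AA^*+BB^*)\big)^{1/2}$, since $(s+t)^2\le 2s^2+2t^2$. The first factor is at most $\big(\Tr((A-B)(A-B)^*)\big)^{1/2}$ by Mirsky's singular-value version of Hoffman--Wielandt. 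This follows by applying Hoffman--Wielandt to the Hermitian dilations $\begin{pmatrix}0&A\\A^*&0\end{pmatrix}$ and $\begin{pmatrix}0&B\\B^*&0\end{pmatrix}$ and dividing by $2$.

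\emph{Step 4: combine.} Steps 1--3 give $L^2\le \frac1p\big(2\Tr((A-B)(A-B)^*)\Tr(AA^*+BB^*)\big)^{1/2}$. Squaring yields the stated bound.

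The only nontrivial input is the Mirsky inequality in Step 3; I would quote it, or derive it from Hoffman--Wielandt via the dilation as described. The remaining steps are elementary, with care needed only for the zero-padding bookkeeping when $p\ne n$.
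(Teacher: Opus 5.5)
Your proof is correct. The paper gives no proof of its own and simply quotes Bai (1999, Lemma 2.7); your argument is essentially that standard proof: bound $L^2(F^{AA^*},F^{BB^*})$ by $\frac1p\sum_k|s_k^2-t_k^2|$, then apply Cauchy--Schwarz to $(s_k-t_k)(s_k+t_k)$ together with the singular-value Hoffman--Wielandt (Mirsky) inequality. The only difference is cosmetic: the cited argument gets the first bound by directly counting the indices $k$ where $|s_k^2-t_k^2|$ exceeds $\varepsilon$, whereas you go through $\int_{\bR}|F-G|\,dx$, which works equally well.
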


\section{Proofs} \label{sec-proofs}

In this section, we develop the proof of Theorem \ref{Thm} using the strategy of comparing $M_{n,k,m}$ with $\widetilde M_{n,k,m}$. Then we explain how to deduce Corollary \ref{Coro} from Theorem \ref{Thm}.

Without loss of generality, we assume that $m_2 = \bE [|\xi_1|^2] = 1$.
We start with the following moment estimates on the norm of the random vector $Y_\alpha$.
\begin{lemma} \label{Lem-moment-Y}
Assume that $m_4 = \bE[|\xi_1|^4] < \infty$, then we have
\begin{align*}
    \bE \left[ \|Y_\alpha\|^2 \right]
    = n^k,
    \quad \quad
    \bE \left[ \|Y_\alpha\|^4 \right]
    = n^{2k} \left( 1 + \dfrac{m_4-1}{n} \right)^k.
\end{align*}
\end{lemma}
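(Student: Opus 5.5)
The plan is to use the factorization of the squared norm of a tensor product. For fixed $\alpha$, since $(Y_\alpha)_{(j_1,\ldots,j_k)} = \prod_{l=1}^k (\y_\alpha^{(l)})_{j_l}$, summing $|\cdot|^2$ over all multi-indices factorizes:
\begin{align*}
    \|Y_\alpha\|^2 = \sum_{j_1,\ldots,j_k=1}^n \prod_{l=1}^k \left| \left(\y_\alpha^{(l)}\right)_{j_l} \right|^2 = \prod_{l=1}^k \left\| \y_\alpha^{(l)} \right\|^2 .
\end{align*}
The vectors $\y_\alpha^{(1)},\ldots,\y_\alpha^{(k)}$ are i.i.d. copies of $\y$, so the factors $\|\y_\alpha^{(l)}\|^2$ are i.i.d. Hence
\begin{align*}
    \bE \left[ \|Y_\alpha\|^2 \right] = \left( \bE \|\y\|^2 \right)^k,
    \quad\quad
    \bE \left[ \|Y_\alpha\|^4 \right] = \left( \bE \|\y\|^4 \right)^k .
\end{align*}
This reduces everything to moments of the single vector $\y$.

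The next step computes these two moments. With the normalization $m_2 = 1$, we have $\bE\|\y\|^2 = \sum_{j=1}^n \bE|\xi_j|^2 = n$. For the fourth moment, expand
\begin{align*}
    \|\y\|^4 = \sum_{j=1}^n |\xi_j|^4 + \sum_{i \ne j} |\xi_i|^2 |\xi_j|^2 .
\end{align*}
By independence and identical distribution, its expectation is $n m_4 + n(n-1) = n^2 + n(m_4 - 1) = n^2\left(1 + \frac{m_4-1}{n}\right)$. Raising these to the $k$-th power gives exactly $n^k$ and $n^{2k}\left(1+\frac{m_4-1}{n}\right)^k$.

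There is no real obstacle here. The only point needing care is that the factorization uses independence across the fold index $l$, and the expectation of the product is finite because $m_4 < \infty$. Centering of $\xi_1$ is not needed, because only the moduli $|\xi_j|^2$ appear.
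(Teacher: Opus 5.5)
Your proposal is correct and follows the paper's proof essentially step for step. You factor $\|Y_\alpha\|^2=\prod_{l}\|\y_\alpha^{(l)}\|^2$, use independence across the fold index, and compute $\bE\|\y\|^2=n$ and $\bE\|\y\|^4=nm_4+n(n-1)$ under the normalization $m_2=1$.
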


\begin{proof}
As $\{\xi_1,\ldots,\xi_n\}$ are i.i.d. centered random variables with unit variance, we have
\begin{align} \label{eq-y-2 moment}
    \bE \left[ \|\y\|^2 \right]
    = \sum_{i=1}^n \bE \left[ |\xi_i|^2 \right]
    = n,
\end{align}
and
\begin{align} \label{eq-y-4 moment}
    \bE \left[ \|\y\|^4 \right]
    = \sum_{i=1}^n \bE \left[ |\xi_i|^4 \right] + \sum_{i\not=j,i,j=1}^n \bE \left[ |\xi_i|^2 |\xi_j|^2 \right]
    = n m_4 + n(n-1)
    = n^2 \left( 1 + \dfrac{m_4-1}{n} \right).
\end{align}
Note that for $1 \le \alpha \le m$, we have
\begin{align} \label{eq-||Y||2}
    \|Y_\alpha\|^2
    = \sum_{j_1,\ldots,j_k=1}^n \prod_{l=1}^k \left| \left( \y_\alpha^{(l)} \right)_{j_l} \right|^2
    = \prod_{l=1}^k \sum_{j_l=1}^n \left| \left( \y_\alpha^{(l)} \right)_{j_l} \right|^2
    = \prod_{l=1}^k \| \y_\alpha^{(l)} \|^2.
\end{align}
By independence, \eqref{eq-y-2 moment}, \eqref{eq-y-4 moment} and \eqref{eq-||Y||2}, we obtain
\begin{align*}
    \bE \left[ \|Y_\alpha\|^2 \right]
    = \prod_{l=1}^k \bE \left[ \| \y_\alpha^{(l)} \|^2 \right]
    = n^k,
\end{align*}
and
\begin{align*}
    \bE \left[ \|Y_\alpha\|^4 \right]
    = \prod_{l=1}^k \bE \left[ \| \y_\alpha^{(l)} \|^4 \right]
    = n^{2k} \left( 1 + \dfrac{m_4-1}{n} \right)^k.
\end{align*}
\end{proof}

The following result is the comparison of the distribution functions of $M_{n,k,m}$ and $\widetilde M_{n,k,m}$, which is the key part of the proof to Theorem \ref{Thm}.

\begin{proposition} \label{Prop-compare}
Let the limiting conditions \eqref{eq-def-ratio}, \eqref{eq-def-tau} and \eqref{eq-lim-k} hold.
Assume that $m_4 = \bE[|\xi_1|^4] < \infty$, then
\begin{align*}
    \lim_{n \to \infty} L \left( F^{M_{n,k,m}}, F^{\widetilde M_{n,k,m}} \right)
    = 0 
\end{align*}
in probability.
\end{proposition}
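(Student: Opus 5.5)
We compare $M_{n,k,m}=Y\Lambda_mY^*$ and $\widetilde M_{n,k,m}=\widetilde Y\Lambda_m\widetilde Y^*$ via Lemma \ref{Lem-preliminary-compare}. Take $A=\widetilde Y\Lambda_m^{1/2}$ and $B=Y\Lambda_m^{1/2}$, with $p=n^k$. Then $AA^*=\widetilde M_{n,k,m}$ and $BB^*=M_{n,k,m}$. Lemma \ref{Lem-preliminary-compare} gives
\begin{align*}
L^4\left(F^{M_{n,k,m}},F^{\widetilde M_{n,k,m}}\right)\le \frac{2}{n^{2k}}\,\Tr\left((\widetilde Y-Y)\Lambda_m(\widetilde Y-Y)^*\right)\,\Tr\left(\widetilde M_{n,k,m}+M_{n,k,m}\right).
\end{align*}
It therefore suffices to show two things. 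First, $n^{-k}\Tr(\widetilde M_{n,k,m}+M_{n,k,m})=O_P(1)$. Second, $n^{-k}\Tr((\widetilde Y-Y)\Lambda_m(\widetilde Y-Y)^*)\to0$ in probability.

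For the second quantity, the $\alpha$-th column of $\widetilde Y-Y$ is $Y_\alpha(n^{-k/2}-\|Y_\alpha\|^{-1})$. So, as in Lemma \ref{Lem-matrix}, its trace equals
\begin{align*}
\sum_{\alpha=1}^m\tau_\alpha\left(\frac{\|Y_\alpha\|}{n^{k/2}}-1\right)^2 .
\end{align*}
We bound it using $(x-1)^2\le (x^2-1)^2$ for $x\ge0$, which holds since $|x-1|\le|x-1|(x+1)$. Lemma \ref{Lem-moment-Y} then gives
\begin{align*}
\bE\left(\frac{\|Y_\alpha\|^2}{n^k}-1\right)^2=\left(1+\frac{m_4-1}{n}\right)^k-1\le e^{k(m_4-1)/n}-1\to0,
\end{align*}
using \eqref{eq-lim-k}. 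Hence the expectation of the trace is at most $\varepsilon_n\sum_\alpha\tau_\alpha$, where $\varepsilon_n\to0$. Dividing by $n^k$ gives $\varepsilon_n\cdot\frac{m}{n^k}\cdot\frac1m\sum\tau_\alpha\to0$, by \eqref{eq-def-ratio} and \eqref{eq-def-tau} with $q=1$. Markov's inequality (the $\tau_\alpha$ are positive) then yields convergence to $0$ in probability.

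For the first quantity, $\Tr M_{n,k,m}=\sum_\alpha\tau_\alpha$, so $n^{-k}\Tr M_{n,k,m}\to c\,m_1^{(\tau)}$ deterministically. Also $\Tr\widetilde M_{n,k,m}=n^{-k}\sum_\alpha\tau_\alpha\|Y_\alpha\|^2$, whose expectation is $\sum_\alpha\tau_\alpha$ by Lemma \ref{Lem-moment-Y}. Markov's inequality again gives $n^{-k}\Tr\widetilde M_{n,k,m}=O_P(1)$. Combining the two estimates, the right-hand side of the displayed bound tends to $0$ in probability, and so does the L\'evy distance.

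The only delicate step is controlling $\bE(\|Y_\alpha\|^2/n^k-1)^2$ uniformly in the growing $k$. This is exactly where $k=o(n)$ is needed: the variance of the product of $k$ norms behaves like $e^{k(m_4-1)/n}-1$, which vanishes only when $k/n\to0$. The rest is bookkeeping with Lemma \ref{Lem-preliminary-compare}.
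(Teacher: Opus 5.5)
Your proof is correct and follows the paper's own route: Lemma \ref{Lem-preliminary-compare} applied to $\widetilde Y\Lambda_m^{1/2}$ and $Y\Lambda_m^{1/2}$, the column identity of Lemma \ref{Lem-matrix} for the difference trace, and the fourth-moment formula of Lemma \ref{Lem-moment-Y} as the place where $k=o(n)$ is used. Your version is somewhat leaner in two places. Keeping the difference trace in the nonnegative form $\sum_\alpha\tau_\alpha(\|Y_\alpha\|/n^{k/2}-1)^2$ lets a first-moment Markov bound replace the paper's triangle-inequality, Cauchy--Schwarz and Chebyshev steps. An $O_P(1)$ bound on $n^{-k}\Tr(\widetilde M_{n,k,m}+M_{n,k,m})$ replaces the paper's Borel--Cantelli argument, and that is all convergence in probability requires.
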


\begin{proof}
We apply Lemma \ref{Lem-preliminary-compare} for the matrix $A = Y \sqrt{\Lambda_m}$ and $B = \widetilde Y \sqrt{\Lambda_m}$, where $\sqrt{\Lambda_m} = \diag(\sqrt{\tau_1},\ldots,\sqrt{\tau_m})$. We have
\begin{align} \label{ineq-difference}
    L^4 \left( F^{M_{n,k,m}}, F^{\widetilde M_{n,k,m}} \right)
    \le \dfrac{2}{n^{2k}} \Tr \left( \left( Y - \widetilde Y \right) \Lambda_m \left( Y - \widetilde Y \right)^* \right) \Tr \left( Y \Lambda_m Y^* + \widetilde Y \Lambda_m \widetilde Y^* \right).
\end{align}
We handle the right hand side of \eqref{ineq-difference} term by term.

\bigskip

\noindent{\bf Step 1.} In this step, we bound the second term
\begin{align*}
    \dfrac{1}{n^k} \Tr \left( Y \Lambda_m Y^* + \widetilde Y \Lambda_m \widetilde Y^* \right)
\end{align*}
in \eqref{ineq-difference}.

By \eqref{eq-correlated matrix}, we have
\begin{align} \label{ineq-trace-bound-1}
    \Tr \left( Y \Lambda_m Y^* \right)
    = \sum_{\alpha=1}^m \tau_\alpha \dfrac{\Tr \left( Y_{\alpha} Y_{\alpha}^* \right)}{\|Y_\alpha\|^2}
    = \sum_{\alpha=1}^m \tau_\alpha \dfrac{\Tr \left( Y_{\alpha}^* Y_{\alpha} \right)}{\|Y_\alpha\|^2}
    = \sum_{\alpha=1}^m \tau_\alpha.
\end{align}
By \eqref{eq-covariance matrix}, we get
\begin{align*}
    \Tr \left( \widetilde Y \Lambda_m \widetilde Y^* \right)
    = \dfrac{1}{n^k} \sum_{\alpha=1}^m \tau_{\alpha} \Tr \left( Y_{\alpha} Y_{\alpha}^* \right)
    = \dfrac{1}{n^k} \sum_{\alpha=1}^m \tau_{\alpha} \|Y_{\alpha}\|^2,
\end{align*}
which together with Lemma \ref{Lem-moment-Y} implies
\begin{align*}
    \bE \left[ \Tr \left( \widetilde Y \Lambda_m \widetilde Y^* \right) \right]
    = \sum_{\alpha=1}^m \tau_\alpha,
\end{align*}
and
\begin{align*}
    &\bE \left[ \left| \Tr \left( \widetilde Y \Lambda_m \widetilde Y^* \right) \right|^2 \right]
    = \dfrac{1}{n^{2k}} \left( \sum_{\alpha=1}^m \tau_\alpha^2 \bE \left[ \|Y_\alpha\|^4 \right] + \sum_{\alpha\not=\beta,\alpha,\beta=1}^m \tau_\alpha\tau_\beta \bE \left[ \|Y_\alpha\|^2 \|Y_\beta\|^2 \right] \right) \\
    =& \sum_{\alpha=1}^m \tau_\alpha^2 \left( 1 + \dfrac{m_4-1}{n} \right)^k + \sum_{\alpha\not=\beta,\alpha,\beta=1}^m \tau_\alpha\tau_\beta
    = \left( \sum_{\alpha=1}^m \tau_\alpha \right)^2 + \sum_{\alpha=1}^m \tau_\alpha^2 \left( \left( 1 + \dfrac{m_4-1}{n} \right)^k - 1 \right).
\end{align*}
Hence, for any $x>0$, by Markov inequality and Lemma \ref{Lem-moment-Y}, we obtain
\begin{align} \label{ineq-trace-bound-2}
    &\bP \left( \left| \Tr \left( \widetilde Y \Lambda_m \widetilde Y^* \right) - \sum_{\alpha=1}^m \tau_\alpha \right| > x \right) \nonumber \\
    \le& \dfrac{1}{x^2} \bE \left[ \left| \Tr \left( \widetilde Y \Lambda_m \widetilde Y^* \right) - \sum_{\alpha=1}^m \tau_\alpha \right|^2 \right]
    = \dfrac{1}{x^2} \left( \bE \left[ \left| \Tr \left( \widetilde Y \Lambda_m \widetilde Y^* \right) \right|^2 \right] - \left( \sum_{\alpha=1}^m \tau_\alpha \right)^2 \right) \nonumber \\
    =& \dfrac{1}{x^2} \sum_{\alpha=1}^m \tau_\alpha^2 \left( \left( 1 + \dfrac{m_4-1}{n} \right)^k - 1 \right).
\end{align}
Now we choose $x = n^k$ in \eqref{ineq-trace-bound-2} and use \eqref{ineq-trace-bound-1} to obtain
\begin{align*}
    \bP \left( \left| \dfrac{1}{n^k} \Tr \left( Y \Lambda_m Y^* + \widetilde Y \Lambda_m \widetilde Y^* \right) - \dfrac{2}{n^k} \sum_{\alpha=1}^m \tau_\alpha \right| > 1 \right)
    \le \dfrac{1}{n^{2k}} \sum_{\alpha=1}^m \tau_\alpha^2 \left( \left( 1 + \dfrac{m_4-1}{n} \right)^k - 1 \right).
\end{align*}
By Taylor expansion, when $n \to \infty$, we have
\begin{align} \label{eq-appro}
    & \left( \left( 1 + \dfrac{m_4-1}{n} \right)^k - 1 \right)
    = \left( \exp \left( k \ln \left( 1 + \dfrac{m_4-1}{n} \right) \right) - 1 \right) \nonumber \\
    =& \left( \exp \left( (m_4-1) \dfrac{k}{n} + O(kn^{-2}) \right) - 1 \right)
    = \left( (m_4-1) \dfrac{k}{n} + O(k^2n^{-2}) \right),
\end{align}
which together with the convergence \eqref{eq-def-tau} implies
\begin{align*}
    \sum_{n=1}^\infty \bP \left( \left| \dfrac{1}{n^k} \Tr \left( Y \Lambda_m Y^* + \widetilde Y \Lambda_m \widetilde Y^* \right) - \dfrac{2}{n^k} \sum_{\alpha=1}^m \tau_\alpha \right| > 1 \right) < \infty.
\end{align*}
By Borel-Cantelli's lemma, with probability one, it holds eventually that
\begin{align} \label{ineq-trace-bound-3}
    \left| \dfrac{1}{n^k} \Tr \left( Y \Lambda_m Y^* + \widetilde Y \Lambda_m \widetilde Y^* \right) - \dfrac{2}{n^k} \sum_{\alpha=1}^m \tau_\alpha \right| \le 1.
\end{align}

\bigskip

\noindent{\bf Step 2.} In this step, we establish the convergence of the first term
\begin{align*}
    \dfrac{1}{n^k} \Tr \left( \left( Y - \widetilde Y \right) \Lambda_m \left( Y - \widetilde Y \right)^* \right)
\end{align*}
in \eqref{ineq-difference}.

We apply Lemma \ref{Lem-matrix} with $A = (Y_1 \ldots Y_m) \sqrt{\Lambda_m}$, we have
\begin{align} \label{ineq-trace-bound-4}
    \Tr \left( \left( Y - \widetilde Y \right) \Lambda_m \left( Y - \widetilde Y \right)^* \right)
    = \sum_{\alpha=1}^m \tau_\alpha \left( \dfrac{\|Y_\alpha\|^2}{n^k} - 1 \right) - 2 \sum_{\alpha=1}^m \tau_\alpha \left( \dfrac{\|Y_\alpha\|}{\sqrt n^k} - 1 \right).
\end{align}

On one hand, we have
\begin{align} \label{ineq-estimate-1}
    & \left| \sum_{\alpha=1}^m \tau_\alpha \left( \dfrac{\|Y_\alpha\|}{\sqrt n^k} - 1 \right) \right|
    \le \sum_{\alpha=1}^m \tau_\alpha \left| \dfrac{\|Y_\alpha\|}{\sqrt n^k} - 1 \right|
    \le \sum_{\alpha=1}^m \tau_\alpha \left| \dfrac{\|Y_\alpha\|^2}{n^k} - 1 \right|.
\end{align}
On the other hand, we can see that
\begin{align} \label{ineq-estimate-2}
    \left| \sum_{\alpha=1}^m \tau_\alpha \left( \dfrac{\|Y_\alpha\|^2}{n^k} - 1 \right) \right|
    \le \sum_{\alpha=1}^m \tau_\alpha \left| \dfrac{\|Y_\alpha\|^2}{n^k} - 1 \right|.
\end{align}
Substituting \eqref{ineq-estimate-1} and \eqref{ineq-estimate-2} to \eqref{ineq-trace-bound-4}, we get
\begin{align} \label{ineq-trace-bound-5}
    \left| \Tr \left( \left( Y - \widetilde Y \right) \Lambda_m \left( Y - \widetilde Y \right)^* \right) \right|
    \le 3 \sum_{\alpha=1}^m \tau_\alpha \left| \frac{\|Y_\alpha\|^2}{n^k} - 1 \right|.
\end{align}

By Cauchy-Schwarz inequality, Lemma \ref{Lem-moment-Y} and independence, we have
\begin{align*}
    & \bE \left[ \left( \sum_{\alpha=1}^m \tau_\alpha \left| \dfrac{\|Y_\alpha\|^2}{n^k} - 1 \right| \right)^2 \right] \\
    =& \bE \left[ \sum_{\alpha=1}^m \tau_\alpha^2 \left| \dfrac{\|Y_\alpha\|^2}{n^k} - 1 \right|^2 + \sum_{\alpha\not=\beta,\alpha,\beta=1}^m \tau_\alpha\tau_\beta \left| \dfrac{\|Y_\alpha\|^2}{n^k} - 1 \right| \left| \dfrac{\|Y_\beta\|^2}{n^k} - 1 \right| \right] \\
    \le& \sum_{\alpha=1}^m \tau_\alpha^2 \bE \left[ \left| \dfrac{\|Y_\alpha\|^2}{n^k} - 1 \right|^2 \right] + \sum_{\alpha\not=\beta,\alpha,\beta=1}^m \tau_\alpha\tau_\beta \left( \bE \left[ \left| \dfrac{\|Y_\alpha\|^2}{n^k} - 1 \right|^2 \right] \right)^{1/2} \left( \bE \left[ \left| \dfrac{\|Y_\beta\|^2}{n^k} - 1 \right|^2 \right] \right)^{1/2} \\
    =& \left( \sum_{\alpha=1}^m \tau_\alpha \right)^2 \bE \left[ \left| \dfrac{\|Y_\alpha\|^2}{n^k} - 1 \right|^2 \right]
    = \left( \sum_{\alpha=1}^m \tau_\alpha \right)^2 \left( \left( 1 + \dfrac{m_4-1}{n} \right)^k - 1 \right).
\end{align*}
Together with \eqref{ineq-trace-bound-5} and Markov inequality, we obtain that for any $x>0$,
\begin{align*}
    & \bP \left( \left| \Tr \left( \left( Y - \widetilde Y \right) \Lambda_m \left( Y - \widetilde Y \right)^* \right) \right| > x \right)
    \le \bP \left( \sum_{\alpha=1}^m \tau_\alpha \left| \dfrac{\|Y_\alpha\|^2}{n^k} - 1 \right| > \dfrac{x}{3} \right) \\
    \le& \dfrac{9}{x^2} \bE \left[ \left( \sum_{\alpha=1}^m \tau_\alpha \left| \dfrac{\|Y_\alpha\|^2}{n^k} - 1 \right| \right)^2 \right]
    \le \dfrac{9}{x^2} \left( \sum_{\alpha=1}^m \tau_\alpha \right)^2 \left( \left( 1 + \dfrac{m_4-1}{n} \right)^k - 1 \right).
\end{align*}
We choose $x = n^k y$ and use \eqref{eq-appro} to get
\begin{align} \label{ineq-trace-bound-5'}
    \bP \left( \dfrac{1}{n^k} \left| \Tr \left( \left( Y - \widetilde Y \right) \Lambda_m \left( Y - \widetilde Y \right)^* \right) \right| > y \right)
    \le \left( \dfrac{1}{n^k} \sum_{\alpha=1}^m \tau_\alpha \right)^2 \dfrac{9}{y^2} \left( (m_4-1) \dfrac{k}{n} + O(k^2n^{-2}) \right).
\end{align}
In view of \eqref{eq-lim-k}, there exists $y = y(n)$ satisfying
\begin{align*}
    \lim_{n \to \infty} \dfrac{1}{y^2} \dfrac{k}{n} = 0
    \quad \mathrm{and} \quad
    \lim_{n \to \infty} y = 0.
\end{align*}
With this $y$, \eqref{ineq-trace-bound-5'} implies that
\begin{align} \label{ineq-trace-bound-6}
    \lim_{n \to \infty} \dfrac{1}{n^k} \left| \Tr \left( \left( Y - \widetilde Y \right) \Lambda_m \left( Y - \widetilde Y \right)^* \right) \right| = 0
\end{align}
in probability, noting the convergence \eqref{eq-def-tau}.

\bigskip

The proof is concluded by substituting \eqref{ineq-trace-bound-3} and \eqref{ineq-trace-bound-6} to \eqref{ineq-difference}.
\end{proof}

Now we are ready to prove Theorem \ref{Thm}.

\begin{proof}(of Theorem \ref{Thm}.)
By triangle inequality, Lemma \ref{Lem-LSD-M} and Proposition \ref{Prop-compare}, we obtain that
\begin{align*}
    \lim_{n \to \infty} L \left( F^{M_{n,k,m}}, F_c \right)
    \le \lim_{n \to \infty} L \left( F^{\widetilde M_{n,k,m}}, F^{M_{n,k,m}} \right) + \lim_{n \to \infty} L \left( F^{\widetilde M_{n,k,m}}, F_c \right)
    = 0 
\end{align*}
in probability.
\end{proof}

Next, we turn to the proof of Corollary \ref{Coro}.

\begin{proof}(of Corollary \ref{Coro}.)
Let $\xi_1$ to be a standard Gaussian random variable.
By \eqref{eq-||Y||2}, we have
\begin{align*}
    \dfrac{Y_\alpha}{\|Y_\alpha\|}
    = \dfrac{\y_{\alpha}^{(1)} \otimes \cdots \otimes \y_{\alpha}^{(k)}}{\prod_{l=1}^k \|\y_\alpha^{(l)}\|}
    = \dfrac{\y_{\alpha}^{(1)}}{\|\y_\alpha^{(1)}\|} \otimes \cdots \otimes \dfrac{\y_{\alpha}^{(k)}}{\|\y_\alpha^{(k)}\|}.
\end{align*}
As the random vector $\y_{\alpha}^{(1)}/\|\y_{\alpha}^{(1)}\|$ is distributed on the unit sphere $S^{2n-1}$ of $\bC^n$, and is unitary invariant due to the Gaussian setting of $\xi_1$, one can easily deduce that $\y_{\alpha}^{(1)}/\|\y_{\alpha}^{(1)}\|$ is distributed on $S^{2n-1}$ uniformly.
Hence, we obtain that
\begin{align*}
    M_{n,k,m} = M_{n,k,m}'.
\end{align*}
in distribution.

The Gaussian assumption on $\xi_1$ implies that it has finite moment of any order. Hence, by Theorem \ref{Thm}, we obtain the convergence
\begin{align*}
    \lim_{n \to \infty} F^{M_{n,k,m}'} = \lim_{n \to \infty} F^{M_{n,k,m}} = F_c
\end{align*}
in probability.

\end{proof}

\section*{Acknowledgments} 

WY was supported by Guangdong Basic and Applied Basic Research Foundation (No. 2026A1515030040), and National Natural Science Foundation of China (No. 12501183), and a Grant of the Department of Science and Technology of Guangdong Province (No. 2024QN11X161).

\bibliographystyle{plain}
\bibliography{tensor}

\end{document}